\definecolor{purple}{RGB}{180,90,200}
\definecolor{dgreen}{RGB}{0,160,0}
\definecolor{turquoise}{RGB}{0,180,140}
\newcommand{\VV}[2]{\begingroup{\scriptscriptstyle%
    \left(\begin{smallmatrix}#1\\#2\end{smallmatrix}\right)}%
  \endgroup}
\newcommand{\AL}[3]{\mathfrak{#1}^{#2}_{#3}}
\newtheorem{theorem}{Theorem}
\newtheorem{lemma}[theorem]{Lemma}
\newtheorem{definition}[theorem]{Definition}
\newcommand{\Dtsub}{1} % definition, total, complete subgraph
\newcommand{\Dtone}{2} % at most one element
\newcommand{\Dtcross}{4} % not crossing
\newcommand{\Dtrecover}{5} % recover input order
\newcommand{\Dpsub}{A1} % definition, total, complete subgraph
\newcommand{\Dpone}{A2} % at most one element
\newcommand{\Dporder}{A3} % respect partial order
\newcommand{\Dpcross}{A4} % not crossing indirectly
\newcommand{\Dprecover}{A5} % recover input order
\newcommand{\Dpcrosses}{\Dpcross$^*$} % TODO \Dpcross' ?
\begin{document}

\title{Alignments as Compositional Structures}
\date{}

\author{Sarah Berkemer}
\email{bsarah@bioinf.uni-leipzig.de}
%\homepage{http://compositionality-journal.org}
\orcid{0000-0003-2028-7670}
\thanks{}
\affiliation{Max Planck Institute for Mathematics in the Sciences, Leipzig,
  Germany; and Bioinformatics Group, Department of Computer Science,
  Universit{\"a}t Leipzig, Germany}

\author{Christian H{\"o}ner zu Siederdissen}
\email{choener@bioinf.uni-leipzig.de}
%\homepage{http://compositionality-journal.org}
\orcid{0000-0001-9517-5839}
\thanks{}
\affiliation{Bioinformatics Group, Department of Computer Science,
    Universit{\"a}t Leipzig, Germany}

\author{Peter F.\ Stadler}
\email{studla@bioinf.uni-leipzig.de}
%\homepage{http://compositionality-journal.org}
\orcid{0000-0002-5016-5191}
\thanks{}
\affiliation{Bioinformatics Group, Department of Computer Science,
  Universit{\"a}t Leipzig, Germany; Max Planck Institute for Mathematics
  in the Sciences, Leipzig, Germany; Department of Theoretical Chemistry,
  University of Vienna, Austria; Facultad de Ciencias, Universidad Nacional
  de Colombia, Bogot{\'a}, Colombia; Santa Fe Institute, Santa Fe, NM, USA;
}
\maketitle

\begin{abstract}
  Alignments, i.e., position-wise comparisons of two or more strings or
  ordered lists are of utmost practical importance in computational biology
  and a host of other fields, including historical linguistics and emerging
  areas of research in the Digital Humanities. The problem is well-known to
  be computationally hard as soon as the number of input strings is not
  bounded. Due to its practical importance, a huge number of heuristics
  have been devised, which have proved very successful in a wide range of
  applications. Alignments nevertheless have received hardly any attention
  as formal, mathematical structures. Here, we focus on the compositional
  aspects of alignments, which underlie most algorithmic approaches to
  computing alignments. We also show that the concepts naturally generalize
  to finite partially ordered sets and partial maps between them that in
  some sense preserve the partial orders.
\end{abstract}

\section{Introduction}

Alignments play an important role in particular in bioinformatics as a
means of comparing two or more strings by explicitly identifying
correspondences between letters (usually called matches and mismatches) as
well as insertions and deletions \cite{Durbin:98}. The aligned positions
are interpreted either as deriving from a common ancestor (``homologous'')
or to be functionally equivalent. Alignments have also been explored as
means of comparing words in natural languages, see e.g.\
\cite{Kondrak:00,Cysouw:07,Steiner:11a,Bhattacharya:18a}, as a convenient
way of comparing ranked lists \cite{Fagin:03}, for comparison of text
editions \cite{Wolff:00,Tiepmar:17}, and to analyse synteny in the
comparison of genomes \cite{Grabherr:10,VelandiaHuerto:16b}.

The literature on alignments is extensive. However, it its concerned almost
exclusively with practical algorithms and applications. The alignment
problem for two input strings has an elegant recursive solution for rather
general cost models and has served as one of the early paradigmatic
examples of dynamic programming \cite{Needleman:70,Sankoff:83}. Since these
algorithms have only quadratic space and time requirements for simple cost
models \cite{Needleman:70,Gotoh:82}, they are of key importance in
practical applications. The same recursive structure easily generalizes to
alignments of more than two sequences \cite{Carillo:88,Lipman:89} even
though the cost models need to be more restrictive to guarantee
polynomial-time algorithms \cite{Kececioglu:04}. The computational effort
for these exact solutions to the alignment problem increases exponentially
with the number of sequences, hence only implementations for 3-way
\cite{Gotoh:86,Konagurthu:04,Kruspe:07a} and 4-way alignments
\cite{Steiner:11a} have gained practical importance. A wide variety of
multiple sequence alignment problems (for arbitrary numbers of input
sequences) have been shown to be NP-hard
\cite{Kececioglu:93,Wang:94,Bonizzoni:01,Just:01,Elias:06} and MAX SNP-hard
\cite{Wareham:95,Manthey:03}. The construction of practical multiple
alignment algorithms therefore relies on
heuristic approximations. These fall into several classes, see e.g.\
\cite{Edgar:06,Baichoo:17} for reviews.
  
\par\noindent(1) \emph{Progressive} methods typically compute all pairwise
alignments and then use a ``guide tree'' to determine the order in which
these are stepwisely combined into a multiple alignment of all input
sequences. The classical example is \texttt{ClustalW} \cite{Larkin:07}.
The approach can be extended to starting from exact 3-way
\cite{Konagurthu:04,Kruspe:07a} or 4-way alignments \cite{Steiner:11a}.
\par\noindent(2) \emph{Iterative} methods starting to align small gapless
subsequences and then extend and improve the alignment until the score
converges. A paradigmatic example is \texttt{DIALIGN}
\cite{Morgenstern:99b}.
\par\noindent(3) \emph{Consistency}-based alignments and \emph{consensus}
methods start from a collection of partial alignments (often exact pairwise
alignments) to obtain candidate matches and extract a multiple alignment
using agreements between between the input alignments.

Most of the successful multiple alignment algorithm in computational
biology combine these paradigms. For example \texttt{T-COFFEE}
\cite{Notredame:00} and \texttt{ProbCons} \cite{Do:05} use consistency
ideas in combination with progressive constructions; \texttt{MUSCLE}
\cite{Edgar:04} and \texttt{MAFFT} \cite{Katoh:05} combine progressive
alignments with iterative refinements.

\begin{figure}
  \begin{minipage}{0.9\textwidth}
\begin{verbatim} 
        (a)               (b)               (c)                (d)
  A 0000111110000    A 0000111110000    B 000011011----    B 000011011
  B 000011011----    C ----100010000    C ----100010000    C 100010000
    s = 4              s = 2              s = -5             s = 5
\end{verbatim}
  \end{minipage}
  \caption{Alignments of three binary sequences \texttt{A}, \texttt{B}, and
    \texttt{C} with a simple column-wise score of $+1$ for matches, $0$ for
    mismatches, and $-1$ for gaps. Alignment (c) is transitively implied by
    (a) and (b), but is it not an optimal pairwise alignment of \texttt{B}
    and \texttt{C}.}
\label{fig:cxample0}
\end{figure}
  
A key assumption underlying consistency based methods is transitivity:
considering three input sequences $x$, $y$, and $z$, if $x_i$ aligns with
$y_j$ and $y_j$ aligns with $z_k$, then $x_i$ should also align with
$z_k$. While this property holds for the pairwise constituents of a
multiple alignment, it is a well known fact that the three score-optimal
alignments that can be constructed from three sequences in general violate
transitivity, see Fig.~\ref{fig:cxample0}. \texttt{TRANSALIGN}
\cite{Malde:13} uses transitivity to align input sequences to a target
database using an intermediary database of sequences to increase the search
space. Here, intermediary sequences show which subsequences of input and
target sequence can be transitively aligned. This may result in a few well
aligned subsequences that are then extended to one aligned region via a
simple scoring function. The same notion of transitivity is also used in
\texttt{psiblast} \cite{Altschul:97} to stepwisely increase the set of
sequences that are faintly similar to an input sequence.

Practical applications distinguish whether the complete input sequences are
to be aligned, or whether a maximally scoring interval is to be
considered. In the latter case one allows an additional ``unaligned state''
for prefixes and/or suffixes of the input. This leads to slight changes in
exact algorithms, exemplified by an extra term in the local Smith-Waterman
algorithm \cite{Smith:81} compared to the global Needleman-Wunsch
\cite{Needleman:70} algorithm. This idea can be generalized to mixed
problems in which a user can determine for each of the two ends of each
input sequence whether it is to be treated as local or global
\cite{Retzlaff:18a}. For the purpose of the present contribution
(partially) local alignments require a slight, trivial extension of the
presentation, which we -- for the sake of clarity in the presentation --
only briefly comment on.

Alignments are usually constructed from strings or other totally ordered
inputs, hence the columns of the resulting alignment are usually also
treated as a totally ordered set. Consecutive insertions and deletions,
however, are not naturally ordered relative to each other:
\begin{equation}
  \begin{minipage}{0.7\textwidth} 
\begin{verbatim}
  gugugu--acgggcca          guguguac--gggcca 
  gucuguug--gggccc          gucugu--uggggccc
\end{verbatim} 
  \end{minipage}
  \label{eq:alncolumns}
\end{equation}
are alignments that are equivalent under most plausible scoring models.
The idea to consider alignment columns as partial orders was explored
systematically in \cite{Lee:02} and a series of follow-up publications
\cite{Lee:03,Grasso:04}. Here, (mis)matches are considered as an ordered
backbone, with no direct ordering constraints between an insertion and a
deletion. The resulting alignments are then represented as directed acyclic
graphs (DAGs), more precisely, as the Hasse diagrams of the partial
order. The key idea behind the \texttt{POA} software \cite{Lee:02} is that
a sequence of DAGs can be used as an input to a modified version of the
Needleman-Wunsch algorithm \cite{Needleman:70}. Recently this idea has been
generalized to the problem of aligning a sequence to a general directed
graph \cite{Rautiainen:17,Vaddadi:17}.

Despite the immense practical importance of alignments, they have received
very little attention as mathematical structures in the past. The most
comprehensive treatment, at least to our knowledge, is the Technical Report
\cite{Morgenstern:99}, which considers (pairwise) alignments as binary
relations between sequence positions that represent matchings and preserve
order. We use many of these ideas here.  The notion of a composition of
pairwise alignments -- formalized as composition of partial maps that
represent the matching -- first appears in \cite{Malde:13}. We will return
to this point in Section~\ref{sect:composition}.  Following our earlier
work \cite{Otto:11a}, we will use a language that is closer to graph theory
than the presentation of \cite{Morgenstern:99}.

\section{Alignments and Partial Orders}

Consider a finite collection $X$ of two or more finite totally ordered sets
$X_a$. It will be convenient in the following to denote an element
$i\in X_a$ by $(a,i)$. The following definition rephrases the approach
taken e.g.\ in \cite{Stoye:97,Morgenstern:99}. It will be generalized below
to deal with partial orders instead of total orders.
\begin{definition}[\cite{Otto:11a}]
  A total alignment of the totally ordered sets $X_a$ is a triple $(X,A,<)$
  where $(X,A)$ is a graph and $<$ is a total order relation on the set of
  connected components $\mathcal{C}(X,A)$ satisfying\footnote{There is no
    condition (3) due to synchronization with the definitions for partial
    orders defined later.}
  \begin{itemize}
  \item[{\rm(\Dtsub)}] $Q\in\mathcal{C}(X,A)$ is a complete subgraph of
    $(X,A)$.
  \item[{\rm(\Dtone)}] If $(a,i)\in Q$ and $(a,j)\in Q$, then $i=j$.
  \item[{\rm(\Dtcross)}] If $(a,i),(b,j)\in P$ and $(a,k),(b,l)\in Q$
    with $i<k$ then $j<l$.
  \item[{\rm(\Dtrecover)}] If $(a,i)\in P$, $(a,j)\in Q$ and $i<j$ then
    $P<Q$.
  \end{itemize}
  \label{def:totA1}
\end{definition}
The connected components of $(X,A)$ are usually called the alignment
columns. Condition (\Dtone) ensures that every alignment column contains at
most one element of each ordered set $X_a$.  Conversely, every element
$(a,i)$ is contained in exactly one connected component, i.e., alignment
column.  Condition (\Dtcross) requires that alignment columns do not cross.
Condition (\Dtrecover) ensures that the order on the columns is such that
the projection of the alignment columns to each individual row exactly
recovers the input order.  Conditions (\Dtcross) and (\Dtrecover) in
general only specify a partial order as the following result shows:
\begin{lemma}
  \label{lem:totpo}
  Let $(X,A)$ be the graph of an alignment and denote by $\prec$ the
  relation on $\mathcal{C}(X,A)$ defined by $P\prec Q$ whenever there is
  $(a,i)\in P$ and $(a,j)\in Q$ with $i<j$. Then the transitive closure
  $\ddot\prec$ of $\prec$ is a partial order on $\mathcal{C}(X,A)$.
\end{lemma}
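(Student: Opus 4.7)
The plan is to show that $\ddot\prec$ is a strict partial order on $\mathcal{C}(X,A)$, i.e.\ that it is transitive and irreflexive. Transitivity is free, since $\ddot\prec$ is defined as the transitive closure of $\prec$. So the substance of the lemma is irreflexivity, which amounts to ruling out ``cycles'' $P_0\prec P_1\prec\cdots\prec P_n\prec P_0$.

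The key observation is that axiom (\Dtrecover) is precisely the bridge between the purely graph-theoretic relation $\prec$ and the ambient total order $<$ that comes as part of the alignment datum. First I would unpack the definition: if $P\prec Q$ then by construction there exist $(a,i)\in P$ and $(a,j)\in Q$ with $i<j$ in the ordered set $X_a$. Axiom (\Dtrecover) then immediately gives $P<Q$ in $\mathcal{C}(X,A)$. Thus $\prec\,\subseteq\,<$ as relations on the set of connected components.

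Next I would take transitive closures. Since $<$ is already transitive (being a total order), any chain $P_0\prec P_1\prec\cdots\prec P_n$ lifts to $P_0<P_1<\cdots<P_n$ and hence $P_0<P_n$ by transitivity of $<$. This shows $\ddot\prec\,\subseteq\,<$. Because $<$ is a strict total order, it is irreflexive, so $\ddot\prec$ inherits irreflexivity, which together with transitivity completes the proof.

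I do not expect a genuine obstacle here: the entire content of the lemma is the recognition that axiom (\Dtrecover) exactly encodes compatibility of $\prec$ with the given total order on columns, after which everything reduces to the trivial fact that a subrelation of a transitive irreflexive relation has transitive closure with the same two properties. (One may note in passing that condition (\Dtone) already forces $\prec$ itself to be irreflexive, since $(a,i),(a,j)\in P$ would require $i=j$, but this is subsumed by the argument above.) The real payoff of the lemma is conceptual: it foreshadows the generalization in which the total order $<$ is dropped and one must \emph{construct} a partial order on $\mathcal{C}(X,A)$ out of the graph $(X,A)$ alone.
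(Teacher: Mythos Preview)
Your argument is correct and in fact more transparent than the paper's. The paper does not invoke the ambient total order $<$ or axiom (\Dtrecover) explicitly; instead it asserts that along any $\prec$-chain $Q_0\prec Q_1\prec\cdots\prec Q_k$ the indices belonging to each fixed row $X_a$ are strictly increasing, and concludes that no cycle can close up. That monotonicity assertion, however, does not follow from (\Dtsub), (\Dtone), (\Dtcross) alone: with three rows $a,b,c$ and two-element columns $P_0=\{(a,1),(b,1)\}$, $P_1=\{(b,2),(c,1)\}$, $P_2=\{(c,2),(a,0)\}$, no two columns share two rows, so (\Dtcross) is vacuous, yet $P_0\prec P_1\prec P_2\prec P_0$. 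Hence the paper's chain argument tacitly needs (\Dtrecover) (equivalently, the given total order $<$) just as yours does; your proof simply makes that dependence explicit by embedding $\ddot\prec$ into $<$ and reading off irreflexivity.

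One consequence worth flagging: immediately after the lemma the paper draws the corollary that conditions (\Dtsub), (\Dtone), (\Dtcross) by themselves already guarantee the existence of a total order satisfying (\Dtrecover). Since both your proof and, on close reading, the paper's rely on (\Dtrecover), that corollary is circular---and the three-column example above shows it is in fact false as stated. This is a defect in the paper's surrounding discussion, not in your proof of the lemma as literally formulated.
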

\begin{proof}
  Clearly $\ddot\prec$ is antisymmetric. If $P\prec Q$, then there there is
  a sequence of columns $P=Q_0\ddot\prec Q_1\ddot\prec \dots Q_k=Q$. Since
  the sequence of elements $(a,i)$ belonging to the same $X_a$ is strictly
  increasing with the column index $j$ for each $a$ along any such sequence
  of columns, it follows that the transitive closure of $\ddot\prec$ is
  still antisymmetric, and thus a partial order.
\end{proof}
As an immediate consequence, there is also a (not necessarily unique)
total order $<_*$ of the alignment columns, obtained as an arbitrary linear
extension of $\ddot\prec$, which by construction satisfies
\begin{equation}
  \label{eq:total}
  P <_* Q,\,
  (a,i)\in P,\,\textrm{and}\, (a,j)\in Q \quad\textrm{implies}\quad i<j. 
\end{equation}
Hence, whenever conditions (\Dtsub), (\Dtone), and (\Dtcross) in
Definition~\ref{def:totA1} are satisfied, there indeed exists a total order
on $\mathcal{C}(X,A)$ that satisfies condition (\Dtrecover).

In order to treat (partially) local alignments it is necessary to
distinguish aligned and ``unaligned'' columns. Each unaligned column may
contain only a single element -- note however, that also regular columns
may contain only a single entry from each row.  Furthermore, all
``unaligned'' positions for a prefix and/or a suffix of each input
$(X_a,<_a)$ form ``unaligned'' columns.

In this condition we will consider a more general setting. Instead of
totally ordered sets $X_a$ we will consider finite partially ordered sets
$(X_a,\prec_a)$.
\begin{definition}
  An alignment of $X$ is a triple $(X,A,\prec)$ where $(X,A)$ is a graph
  and $\prec$ is a partial order on the set of connected components
  $\mathcal{C}(X,A)$ such that 
  \begin{itemize}
  \item[{\rm(\Dpsub)}] $Q\in\mathcal{C}(X,A)$ is a complete subgraph of
    $(X,A)$.
  \item[{\rm(\Dpone)}] If $(a,i)\in Q$ and $(a,j)\in Q$, then $i=j$.
  \item[{\rm(\Dporder)}] If $(a,i)\in P$, $(a,j)\in Q$ and
    $(a,i)\prec_a (a,j)$ then $P\prec Q$.
  \item[{\rm(\Dpcross)}] $P\prec Q$, $(a,i)\in P$ and $(a,j)\in Q$ implies
    $(a,i)\prec_a (a,j)$ or $(a,i)$ and $(a,j)$ are incomparable w.r.t.\
    $\prec_a$.
  \end{itemize}
  \label{def:A}
\end{definition}
Condition (\Dporder) constrains the partial order on the columns to respect
the partial order of the rows. Condition (\Dpcross) insists that columns
also must not cross indirectly.

Condition (\Dpcross) obviously implies the following generalization of
(\Dtcross):
\begin{itemize}
\item[(\Dpcrosses)] $(a,i), (b,j)\in P$ and $(a,k),(b,l)\in Q$ and
  $(a,i)\prec_a (a,k)$ implies $(b,j)\prec_b(b,l)$ or $(b,j)$ and $(b,l)$ are
  incomparable w.r.t.\ $\prec_b$. 
\end{itemize}
However, (\Dpcrosses) is not sufficient to guarantee that the alignment columns
form a partially ordered set. A counterexample is shown in
Fig.~\ref{fig:counterx1}. It is therefore necessary to require the
existence of the partial order $\prec$ on $\mathcal{C}(X,A)$ in
Definition~\ref{def:A}.

\begin{figure}
  \begin{center}
    \includegraphics[width=0.7\textwidth]{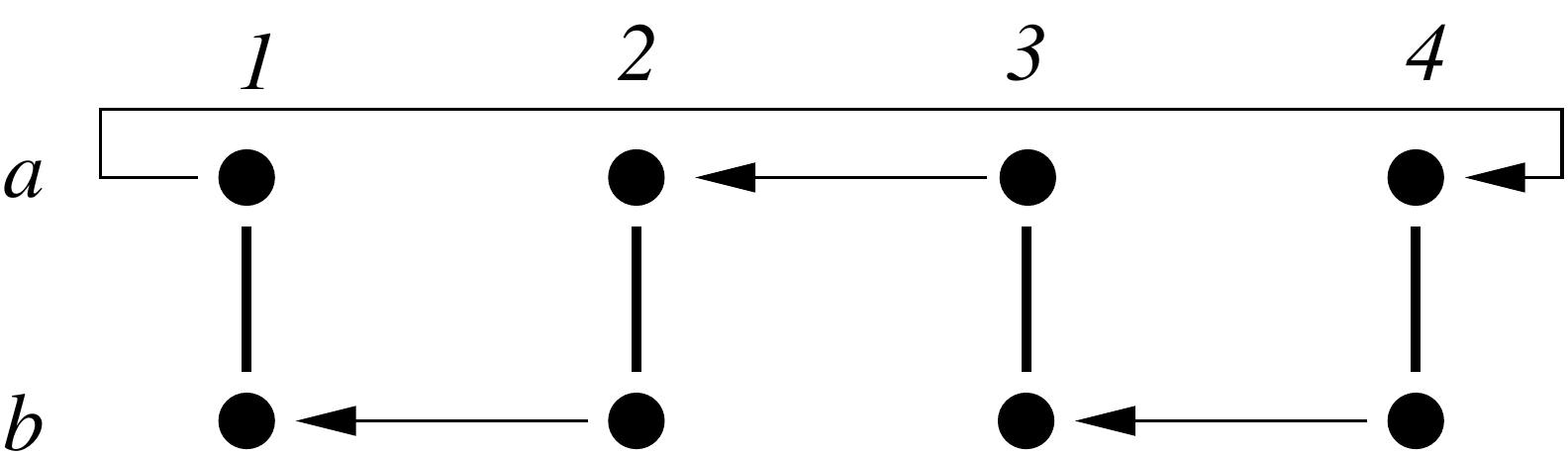}
  \end{center}
  \caption{Property (\Dpcrosses) is not sufficient to ensure the existence
    of a partial order $\prec$ on $\mathcal{C}(X,A)$. Consider the partial
    orders $(a,4)\prec_a (a,1)$ and $(a,2)\prec_a (a,3)$ and
    $(b,1)\prec_b (b,2)$ and $(b,3)\prec_a (b,4)$, with alignment colums
    $\{(a,i),(b,i)\}$ for $i=1,2,3,4$. Clearly (\Dpone), (\Dporder), and
    (\Dpcrosses) holds, but the directed cycle shows that no partial order
    on the colums exists that is consistent with both partial orders.  }
  \label{fig:counterx1}
\end{figure}

In order to model partially local alignments of posets we consider the set
$\mathcal{A}$ of aligned columns and a partition of the set of ``unaligned
columns'' into two not necessarily non-empty subsets $\mathcal{P}$ and
$\mathcal{S}$ such that for all $U\in\mathcal{P}$, $V\in\mathcal{A}$ and
$W\in\mathcal{S}$ it holds that $W\not\preceq V$ and $V\not\preceq U$,
i.e., no ``unaligned'' suffix column preceeds an aligned column, and no
``unaligned'' prefix column succeeds an aligned column. ``Unaligned''
prefix columns belonging to different rows $(X_a,\prec_a)$ are considered
mutually incomparable; the same is assumed for ``unaligned'' suffix
columns. With the caveat that ``unaligned'' columns need to be marked as
such, there is no structural difference between local and global
alignments.

If all $(X_a,\prec_a)$ are totally ordered then condition (\Dpcross)
implies the non-crossing condition (\Dtcross) because $(b,j)$ and $(b,l)$
cannot be incomparable w.r.t.\ $\prec_b$, and thus the required partial
order $\prec$ is obtained as the transitive closure of the relative order
of any two columns. Definitions \ref{def:totA1} and \ref{def:A} therefore
coincide for totally ordered rows.

The existence of (non-trivial) alignments of any collection of finite
partial orders ${(X_i,\prec_i)}$, $i=1,\dots,N$ is easy to see: each of the
partial orders can be linearly extended to a total order $(X_i,<_i)$. Any
alignment of these total orders is also an alignment of the underlying
partial orders, with a suitable partial order of the columns given by
Lemma~\ref{lem:totpo}.

It may be interesting to explore alignments satisfying a (much) stronger
version of axiom (\Dpcross), which stipulates that $(X_a,\prec_a)$ is
recovered as projection of $(X,A)$ onto row $a$, i.e.,
\begin{itemize}
\item[(\Dprecover)] $P\prec Q$, $(a,i)\in P$ and $(a,j)\in Q$ implies
  $(a,i)\prec_a(a,j)$.
\end{itemize} 
As argued above, (\Dpcross) and (\Dprecover) are equivalent if all
$(X_a,\prec_a)$ are totally ordered. In general this is not the case, as
the example in Fig.~\ref{fig:counter2} shows.

\begin{figure}
  \begin{center}
    \includegraphics[width=0.95\textwidth]{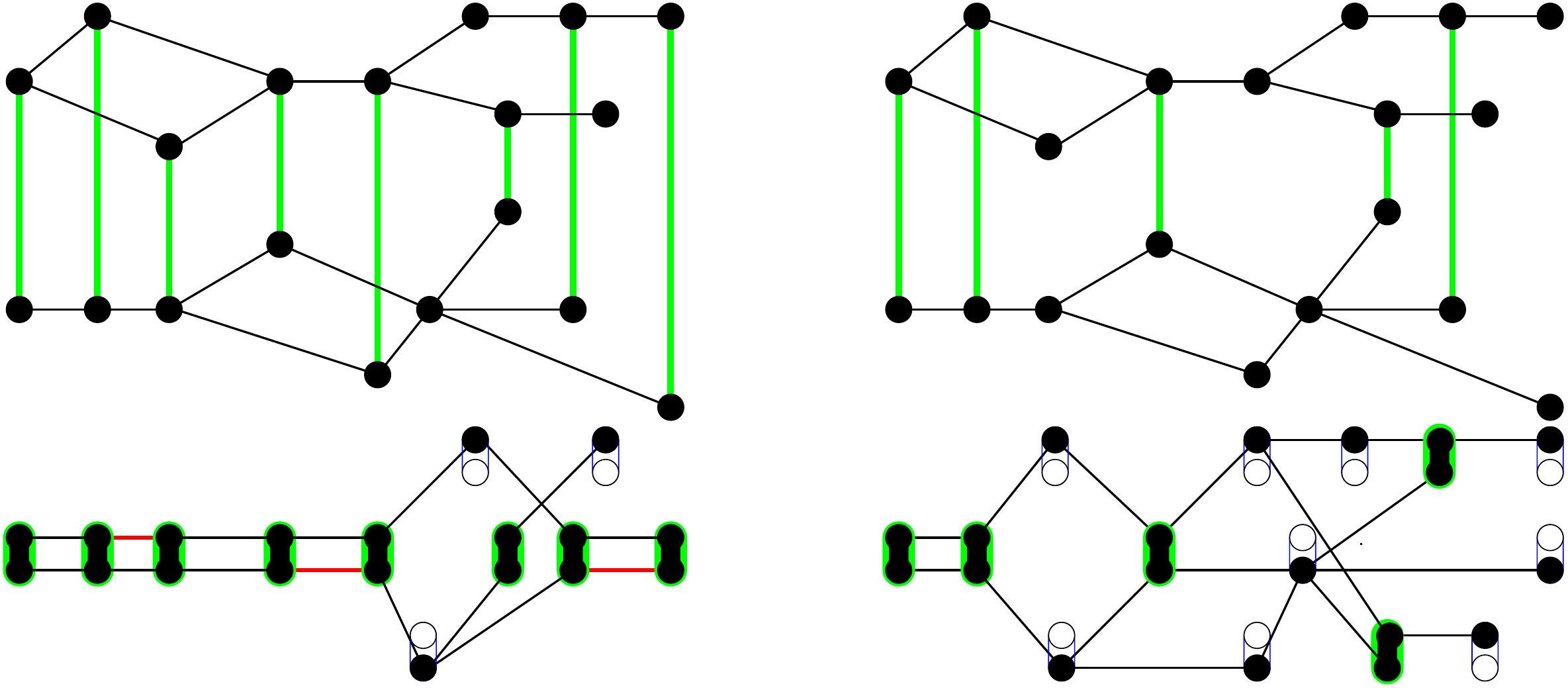}
  \end{center}
  \caption{\textbf{Top:} Pairwise alignments of partially ordered
    sets. Thin black edges show the Hasse diagram, to be read from left to
    right. Alignment edges are shown in green.\newline%
    \textbf{Bottom:} The induced partial order of the alignment columns
    with corresponding points vertically aligned. The partial order is
    again shown as a Hasse diagram, with superflous edges omitted. Both the
    l.h.s.\ and the r.h.s.\ example satisfy (\Dpcross), i.e., none of order
    relations $\prec_1$ and $\prec_2$ is violated in the alignment. The red
    edges highlight two comparabilities introduced by partial order of the
    columns that are absent in the input posets.  Red edges therefore imply
    a violation of condition (\Dprecover). Hence the l.h.s.\ alignment
    violates (\Dprecover), while the r.h.s.\ alignment does not.}
  \label{fig:counter2} 
\end{figure}

The following simple, technical result is a generalization of
Lemma~\ref{lem:totpo}, showing that condition (\Dprecover) is sufficient to
guarantee the existence of a partial order on the columns.
\begin{lemma}
  \label{lem:po2}
  Let $(X,A)$ be a graph with connected components $\mathcal{C}(X,A)$
  satisfying {\rm(\Dpsub)} and {\rm(\Dpone)}. Let $\prec$ denote the
  transitive closure of the relation $\dot\prec$ defined by
  {\rm(\Dporder)}, i.e., $P\dot\prec Q$ whenever $(a,i)\in P$, $(a,j)\in Q$
  and $(a,i)\prec_a (a,j)$ then $P\prec Q$.  Finally assume that axiom
  {\rm(\Dprecover)} holds. Then $\prec$ is a partial order on
  $\mathcal{C}(X,A)$
\end{lemma}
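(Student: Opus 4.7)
The plan is to establish the three defining properties of a (strict) partial order in turn: transitivity, irreflexivity, and antisymmetry. Since $\prec$ is defined to be the transitive closure of $\dot\prec$, transitivity is automatic and requires no argument. The entire substance of the proof therefore lies in showing that $\prec$ is irreflexive, with antisymmetry following as a standard corollary.

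To show irreflexivity, I would argue by contradiction: suppose some $P \in \mathcal{C}(X,A)$ satisfies $P \prec P$. Unwinding the transitive closure, there is a finite chain $P = Q_0 \dot\prec Q_1 \dot\prec \cdots \dot\prec Q_k = P$ with $k \ge 1$. By assumption, the relation $\prec$ we have defined satisfies axiom (\Dprecover). Applied to the pair $(P,Q) = (P,P)$, this axiom forces $(a,i) \prec_a (a,i)$ for every element $(a,i) \in P$. Since $P$ is a connected component of the nonempty graph $(X,A)$ it contains at least one vertex, and each $\prec_a$ is a strict partial order and hence irreflexive, giving the contradiction.

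Antisymmetry is then immediate: if $P \prec Q$ and $Q \prec P$ with $P \neq Q$, transitivity of $\prec$ yields $P \prec P$, which we have just ruled out. Thus $\prec$ is transitive, irreflexive, and antisymmetric, proving it is a partial order on $\mathcal{C}(X,A)$.

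The only place that could even pretend to be an obstacle is checking that (\Dprecover) really does apply to the elements produced by a transitive-closure chain; but since the hypothesis of the lemma is precisely that the closed relation $\prec$ satisfies (\Dprecover), the invocation is direct. In spirit the argument mirrors the proof of Lemma~\ref{lem:totpo}: the role played there by strict monotonicity of row indices along a chain is played here by the row-wise partial orders $\prec_a$, and (\Dprecover) is the tool that converts a would-be cycle in $\prec$ into a forbidden self-comparison within some $(X_a,\prec_a)$.
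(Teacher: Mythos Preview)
Your proof is correct and follows essentially the same route as the paper: both arguments reduce the question to ruling out a $\dot\prec$-cycle among columns and then invoke (\Dprecover) to produce a contradiction in one of the row orders $\prec_a$. The only cosmetic difference is that the paper phrases the target as antisymmetry and applies (\Dprecover) to a consecutive pair $P_i,P_{i+1}$ in the cycle (obtaining both $(a_i,h)\prec_{a_i}(a_i,h')$ and, via the rest of the cycle, $(a_i,h')\prec_{a_i}(a_i,h)$), whereas you phrase the target as irreflexivity and apply (\Dprecover) directly to the pair $(P,P)$ to force $(a,i)\prec_a(a,i)$; your variant is if anything slightly cleaner.
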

\begin{proof}
  It suffices to show that $\prec$ is antisymmetric. It is clear from the
  construction that by (\Dprecover) we know that $\dot\prec$ is
  antisymmetric.  If $\prec$ is not antisymmetric, then there is a finite
  sequence of columns $P_i$, $i=0,\dots, k$ such that
  $P_0\dot\prec P_1\dot\prec\dots\dot\prec P_k\dot\prec P_0$ such that any
  two consecutive columns $P_i$ and $P_{i+1}$ have at a pair of entries,
  say $(a_i,h)\in P_i$ and $(a_i,h')\in P_{i+1}$, in the same row. For the
  transitive closure this would imply both $(a_i,h)\prec(a_i,h')$ from
  $(a_i,h)\dot\prec(a_i,h')$ and $(a_i,h')\prec(a_i,h)$ by going around the
  cycle, contradictiong axiom (\Dprecover).
\end{proof}

Condition (\Dprecover) implies that the restriction of the partial order
$\prec$ on the columns to any subset of columns in which a given set of
rows is represented coincides with the induced partial order on the
corresponding vertex set in $(X_a,\prec_a)$. Regarding the $(X_a,\prec_a)$
as graphs, the aligned columns form a common induced subgraph. The
alignment problem for partially ordered sets under axiom (\Dprecover) thus
can be seen as a generalized version of a maximum induced subgraph
problem. We refer to \cite{Bunke:97} for a discussion of the relationships
of edit distances and maximum common subgraph problems in a more general
setting.

The following result generalizes Lemma~1 of \cite{Otto:11a}:
\begin{lemma}
  \label{lem:subset}
  Let $(X,A,\prec)$ be an alignment and let $Y\subseteq X$. Then the
  induced subgraph $(X,A)[Y]$ with the partial order $\prec$ restricted to
  the non-empty intersections $Q\cap Y$ for $Q\in\mathcal{C}(X,A)$ is again
  an alignment. Furthermore, if $(X,A,\prec)$ satisfies {\rm(\Dprecover)},
  then the restriction to $(X,A)[Y]$ again satisfies {\rm(\Dprecover)}.
\end{lemma}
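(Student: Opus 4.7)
The plan is to verify, one by one, that the four axioms (\Dpsub)--(\Dpcross) of Definition~\ref{def:A} are inherited by the induced structure, and that the restricted relation is still a partial order. The only slightly delicate point is identifying the connected components of the induced subgraph $(X,A)[Y]$; everything else reduces to a direct inheritance from $(X,A,\prec)$.

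First I would establish that the connected components of $(X,A)[Y]$ are exactly the non-empty intersections $Q\cap Y$ with $Q\in\mathcal{C}(X,A)$. Since axiom (\Dpsub) for the ambient alignment says each $Q$ is a complete subgraph, the restriction $Q\cap Y$ is also a complete subgraph and in particular connected; conversely, no edge of $(X,A)$ joins distinct components $Q,Q'$, so no such edge can appear in $(X,A)[Y]$. Hence $\mathcal{C}((X,A)[Y])=\{Q\cap Y\mid Q\in\mathcal{C}(X,A),\,Q\cap Y\neq\emptyset\}$, and (\Dpsub) holds. Axiom (\Dpone) is immediate, because if $(a,i),(a,j)\in Q\cap Y$ then in particular $(a,i),(a,j)\in Q$, forcing $i=j$ by (\Dpone) for the original alignment.

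Next I would define the candidate order on the new columns: for non-empty $Q\cap Y$ and $Q'\cap Y$, declare $Q\cap Y\prec' Q'\cap Y$ iff $Q\prec Q'$ in $(X,A,\prec)$. Since distinct columns of $(X,A)[Y]$ come from distinct $Q$'s, this is well-defined, and it inherits antisymmetry, transitivity, and irreflexivity from $\prec$, so it is a partial order. Axioms (\Dporder) and (\Dpcross) then transfer by direct translation: if $(a,i)\in Q\cap Y$, $(a,j)\in Q'\cap Y$ and $(a,i)\prec_a (a,j)$, then $(a,i)\in Q$ and $(a,j)\in Q'$ give $Q\prec Q'$, hence $Q\cap Y\prec' Q'\cap Y$; and conversely $Q\cap Y\prec' Q'\cap Y$ with representatives in $Y$ gives $Q\prec Q'$, so (\Dpcross) for $(X,A,\prec)$ yields the required comparability/incomparability statement.

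Finally, for the second claim, the same translation works verbatim: if (\Dprecover) holds for $(X,A,\prec)$ and $Q\cap Y\prec' Q'\cap Y$ with $(a,i)\in Q\cap Y$ and $(a,j)\in Q'\cap Y$, then $Q\prec Q'$ with $(a,i)\in Q$ and $(a,j)\in Q'$ forces $(a,i)\prec_a (a,j)$. I expect no genuine obstacle here; the only step that requires a moment's care is the identification of connected components of $(X,A)[Y]$ with intersections $Q\cap Y$, which is what makes the restricted partial order well-defined.
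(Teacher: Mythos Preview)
Your proposal is correct and follows essentially the same approach as the paper: identify the components of $(X,A)[Y]$ as the non-empty $Q\cap Y$ via (\Dpsub), note that (\Dpone) is inherited trivially, restrict $\prec$ to these intersections to obtain the partial order, and observe that (\Dporder), (\Dpcross), and (\Dprecover) hold because each instance in the restriction is already an instance in the ambient alignment. Your write-up is in fact a bit more explicit than the paper's on the well-definedness of the restricted order and the identification of components, but there is no substantive difference in strategy.
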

\begin{proof}
  Every induced subgraph of a complete graph is again a complete graph,
  hence (\Dpsub) holds for $(X,A)[Y]$, hence the connected components of
  $(X,A)[Y]$ are exactly the non-empty intersections of $Y$ with the
  components $Q$ of $(X,A)$.  Condition (\Dpone) remains unchanged by the
  restriction to $Y$. Finally, the partial order $\prec$ satisfying
  (\Dporder) restricted to the non-empty intersections $Q\cap Y$ for
  $Q\in\mathcal{C}(X,A)$ is a partial order that obviously still satisfies
  (\Dpcross) since the restriction to $Y$ only removes some of the
  conditions in (\Dpcross).

  To see that the restriction of $(X,A)[Y]$ again satisfies (\Dprecover) it
  suffices to recall that the partial order in the colums is given by
  $P\cap Y \prec Q\cap Y$ whenever $P\prec Q$ and both
  $P\cap Y\ne\emptyset$ and $Q\cap Y\ne\emptyset$. If one of the
  intersections is empty, axiom (\Dprecover) becomes void since the empty
  set is not a column in $(X,A)[Y]$. On the other hand, if the two
  restricted columns have entries $(a,i)$ and $(a,j)$ in the same row, then
  (\Dprecover) for $(X,A,\prec)$ ensures $(a,i)\prec_a(a,j)$, i.e., the
  implication (\Dprecover) remains true for the restricted alignment.
\end{proof}

Note that additional partial orders on connected components of the induced
subgraph $(X,A)[Y]$ may exist that are not obtained as restrictions of the
partial order on $\mathcal{C}(X,A)$. The reason is that omitting parts of
the columns may allow a relaxation of their mutual ordering. 

Rooted trees can be seen as partially ordered sets, with the natural
partial order defined by $x\prec y$ if $y$ lies on the unique path
connecting $x$ and the root of the tree. This special case is thus covered
in the general framework outlined here. Usually, tree alignments are
defined on rooted \emph{oriented trees}, however, where the relative order
of siblings is preserved \cite{Jiang:95,Hoechsmann:04,Berkemer:17}, thus
imposing additional restrictions on valid alignments. We will return to
this point in some generality in the discussion section.

\section{Composition of Alignments} 
\label{sect:composition}

The fact that alignments are again totally or partially ordered sets
implies that one can also meaningfully define alignments of alignments.
More precisely:

\begin{lemma}
  Let $(X,A,\prec)$ be an alignment and consider a non-trivial partition
  $\mathfrak{P}$ of the set of objects, i.e., the rows. Denote the site
  sets of the classes of $\mathfrak{P}$ by $X_1$, $X_2$, \dots, $X_p$ and
  consider the sub-alignments $(X,A,\prec)[X_i]$.  Then $(X,A,\prec)$ is
  isomorphic to the (vertex) disjoint union of the $(X,A,\prec)[X_i]$
  augmented by extra edges $(x',x'')$ whenever there is a column $Q$ of $A$
  with $x'\in Q\cap X_i$ and $x''\in Q\cap X_j$ for $X_i\ne X_j$.
\end{lemma}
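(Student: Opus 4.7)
The plan is to exhibit an explicit alignment isomorphism, namely the identity on $X$, and check the three ingredients it must respect: the vertex sets, the edge sets, and the induced partial order on columns. The vertex bijection is automatic because the partition $\mathfrak{P}$ of rows induces a partition of the ground set, so $X = \bigsqcup_{i=1}^p X_i$ as a set; this is exactly the vertex set of the described disjoint union.

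The main step is to verify that the edge set of the augmented graph coincides with $A$. The disjoint union $\bigsqcup_i (X,A)[X_i]$ contributes exactly those edges of $A$ with both endpoints in a common $X_i$, by definition of the induced subgraph. The declared extra edges are the pairs $(x',x'')$ with $x'\in Q\cap X_i$ and $x''\in Q\cap X_j$ for some column $Q$ and some $i\ne j$; by axiom (\Dpsub) each column is a clique in $(X,A)$, so every such pair is already in $A$, while conversely any edge of $A$ whose endpoints lie in different parts must lie inside some column, because columns are precisely the connected components of $(X,A)$ and therefore account for all of its edges. Hence the augmented edge set equals $A$.

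Once the edges coincide, the connected components coincide, so the columns of the augmented alignment are literally the columns $\mathcal{C}(X,A)$, and the partial order $\prec$ transports via the identity; by Lemma~\ref{lem:subset} the restriction of $\prec$ to the non-empty intersections $Q\cap X_i$ is exactly the partial order carried by each sub-alignment $(X,A,\prec)[X_i]$, so the partial order on columns of the combined structure agrees with $\prec$ on $(X,A)$. The only point that requires attention, and is the closest thing to an obstacle, is the bookkeeping around empty intersections $Q\cap X_i=\emptyset$: such a $Q$ contributes no vertices, no intra-part edges, and no cross edges to $G_i$, which is benign because empty sets are not columns of any sub-alignment. Beyond this, the argument reduces to observing that (\Dpsub) makes every $Q$ a clique, so its non-empty fragments $Q\cap X_i$ are cliques that the declared cross edges reassemble exactly into $Q$, with no accidental merging across distinct columns.
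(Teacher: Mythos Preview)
Your proof is correct and follows essentially the same approach as the paper: use the identity on $X$ as the isomorphism, observe that the induced subgraphs together with the declared cross edges recover exactly the edge set $A$ (since columns are cliques and are the connected components), and note that the column set and hence $\prec$ is unchanged. Your version is in fact more explicit than the paper's terse proof, in particular in arguing both inclusions for the edge set via (\Dpsub) and in flagging the harmless role of empty intersections $Q\cap X_i$.
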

\begin{proof}
  The alignments $(X,A,\prec)[X_i]$ are induced subgraphs of $(X,A)$. Their
  disjoint union lacks exactly all edges that connect pairs of vertices
  that are in the same connected component of $(X,A)$ but are not in the
  same subgraph $(X,A)[X_i]$. Since the partial order on the colums of
    $(X,A)[X_i]$ is the one inherted from $(X,A,\prec)$, the re-composition
    of the columns also recovers the original partial order.
\end{proof}

The $(X,A,\prec)[X_i]$ can also be interpreted as partially ordered sets
whose \emph{points} are the non-empty restrictions $Q\cap X_i$ of the
connected components of $(X,A)$.
\begin{definition}
  We denote by $(X,A)/\mathfrak{P}$ the quotient graph whose vertices are
  the columns of the alignments $(X,A)[X_i]$, that is, the non-empty sets
  $Q\cap X_i$ where $Q$ is a connected component of $(X,A)$. Its edges are
  the pairs $(Q\cap X_i,Q\cap X_j)$ for which both $Q\cap X_i$ and
  $Q\cap X_j$ are non-empty. 
\end{definition}
The connected components of the graph $(X,A)/\mathfrak{P}$ are therefore of
the form $Q' := Q/\mathfrak{P} = \{ Q\cap X_i | Q\cap
X_i\ne\emptyset\}$. Note that $Q'$ is non-empty since the column $Q$ of
$(X,A)$ contains at least one element, which belongs to at least one of the
$(X,A)[X_i]$. Thus there is a 1-1 correspondence between the connected
components of $(X,A)$ and those of $(X,A)/\mathfrak{P}$. The columns of
$(X,A)/\mathfrak{P}$ naturally inherit the partial order $\prec$ of
$\mathcal{C}(X,A)$. We write $(X,A,\prec)/\mathfrak{P}$ for the quotient
graph with this partial order on its connected components.
\begin{lemma}
  \label{lem:quotient}
  $(X,A,\prec)/\mathfrak{P}$ is an alignment.
\end{lemma}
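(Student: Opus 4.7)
The plan is to verify the four axioms (\Dpsub), (\Dpone), (\Dporder), (\Dpcross) of Definition~\ref{def:A} for the triple $(X,A,\prec)/\mathfrak{P}$, treating the sub-alignments $(X,A,\prec)[X_i]$ as the new ``rows''. By Lemma~\ref{lem:subset} each such sub-alignment carries a partial order on its columns, namely the restriction of $\prec$, so these rows are genuine finite posets of exactly the type to which Definition~\ref{def:A} applies.

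First I would record the key structural observation: by construction the edges of $(X,A)/\mathfrak{P}$ are precisely the pairs $(Q\cap X_i,\,Q\cap X_j)$ coming from a single column $Q$ of $(X,A)$, so the connected components of the quotient are exactly the sets $Q'=\{Q\cap X_i : Q\cap X_i\ne\emptyset\}$ and stand in bijection with $\mathcal{C}(X,A)$. Axiom (\Dpsub) is then immediate: every pair of vertices inside $Q'$ is adjacent by the very definition of the quotient edge set. Axiom (\Dpone) follows because each row $(X,A)[X_i]$ contributes at most one vertex, namely $Q\cap X_i$, to any $Q'$.

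For (\Dporder) and (\Dpcross) I would exploit that $P'\prec Q'$ in the quotient was defined to hold exactly when $P\prec Q$ holds for the corresponding columns in $\mathcal{C}(X,A)$. If two vertices of the quotient in the same row $(X,A)[X_i]$ satisfy $P\cap X_i\prec_i Q\cap X_i$, then by the definition of the restricted order this already means $P\prec Q$ in $\mathcal{C}(X,A)$, hence $P'\prec Q'$; this is (\Dporder). Conversely, if $P'\prec Q'$ in the quotient and both $P\cap X_i$ and $Q\cap X_i$ are non-empty, then $P\prec Q$ in the ambient alignment, and restricting to $X_i$ gives either $P\cap X_i\prec_i Q\cap X_i$ or that the two restricted columns are incomparable in $\prec_i$ — which is exactly what (\Dpcross) demands.

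The only subtle point is bookkeeping: the ``row elements'' of the new alignment are alignment columns rather than original sites, and the partial order on a row is the restriction of $\prec$, not some fresh combinatorial relation. Once Lemma~\ref{lem:subset} is invoked to provide this identification, all four axioms reduce to unfolding definitions through the bijection $Q\leftrightarrow Q'$, so no new combinatorial argument is required and the verification is essentially routine.
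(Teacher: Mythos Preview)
Your proposal is correct and follows essentially the same route as the paper: both arguments use the bijection $Q\leftrightarrow Q'$ between columns of $(X,A)$ and of the quotient, observe that each $Q'$ is complete and meets every class at most once, and then transfer the partial order $\prec$ verbatim through this bijection. Your write-up is simply more explicit in checking (\Dporder) and (\Dpcross) separately, whereas the paper compresses these into the single remark that the inherited order is ``consistent with the partial order on each of the $(X,A)[X_i]$''.
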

\begin{proof}
  Consider the quotient graph $(X,A)/\mathfrak{P}$. By construction, each
  column $Q'$ is a complete graph and contains at most one node for each
  class of $\mathfrak{P}$ since it is the quotient of a column of
  $(X,A,\prec)$ w.r.t.\ $\mathfrak{P}$. Also by construction, we have
  $P'\prec Q'$ for the columns of $(X,A)/\mathfrak{P}$ whenever $P \prec Q$
  in $(X,A,\prec)$. Since there is a 1-1 correpondence between columns of
  $(X,A,\prec)$ and $(X,A,\prec)/\mathfrak{P}$, $\prec$ also serves as a
  partial order on the columns of $(X,A)/\mathfrak{P}$, which is by
  construction consistent with the partial order on each of the
  $(X,A)[X_i]$.
\end{proof}

As a consequence, every alignment can be decomposed into an alignment of
alignments w.r.t.\ an arbitrary partition of the rows. The constituent
alignments $(X_i,A,\prec_i)$ have at most the same number of columns since
``all gap'' columns, $Q'=Q\cap X_i=\emptyset$, are removed. By
Lemma~\ref{lem:quotient}, the decomposition can be used recursively until
each constituent is only a single partially ordered set $(X_a,\prec_a)$.
Any such recursive composition is naturally represented as a tree
$\mathfrak{T}$ whose leaves are the input posets $(X_a,\prec_a)$.  Each
internal node of $\mathfrak{T}$ corresponds the an alignment of its
children, with the root corresponding to $(X,A,\prec)$, the alignment of
all the data.

The reverse of this type of decomposition underlies all \emph{progressive
  alignment} schemes.  One starts from a guide tree $\mathfrak{T}$ whose
leaves are the $(X_a,\prec_a)$ and for each inner node of $\mathfrak{T}$
constructs an alignment (or a set of alternative alignments) from the (set
of) alignments attached to its children.  It is important to note that a
score-optimal alignment $(X,A,\prec)$ in general is \textbf{not} the
score-optimal alignment $(X,A,\prec)/\mathfrak{P}$ of score-optimal
consitutents $(X_i,A_i,\prec_i)$, or, in other words, if $(X,A,\prec)$ is
score-optimal, there is no guarantee that there is any partition of the
rows $\mathfrak{P}$ such that all the restrictions $(X,A,\prec)[X_i]$ are
score-optimal subalignments. Progressive alignments methods thus can only
approximate the solution of the multiple alignment problem. Practical
results depend substantially on the choice of the guide tree
$\mathfrak{T}$. It is has been suggested early \cite{Feng:87}, that
$\mathfrak{T}$ should closely resemble the evolutionary history of the
input sequences. Usually $\mathfrak{T}$ is constructed from distance or
similarity measures between all pairs of input sequences -- and usually
pairwise alignments are employed to obtain these data. A special case of
progressive alignment adds a single sequence in each step, instead of also
considering alignments of alignments.

\section{Blockwise Decompositions}

On the other hand, we can also decompose alignments into blocks of columns.
More precisely, if $(X,A,\prec)$ is an alignment and $\mathfrak{Q}$ is a
partition of the $X$ with classes $Y_k$ such that
\begin{itemize}
\item[(i)]  If $P\in\mathcal{C}(X,A)$ then $P\subseteq Y_k$ for some class
  $Y_k\in\mathfrak{Q}$.
\item[(ii)] There is a partial order $\triangleleft$ on $\mathfrak{Q}$ such
  that for any two distinct classes $Y', Y''\in\mathfrak{Q}$ such that
  $Y'\triangleleft Y''$ whenever there are columns $P\in Y'$ and
  $Q\in Y''$ with $P\prec Q$.
\end{itemize}
We call the classes of such a partition \emph{blocks}.  By
Lemma~\ref{lem:subset} each block $(X,A,\prec)[Y_k]$ is again an
alignment.

\begin{lemma}
  Given blocks $(X,A,\prec)[Y_k]$ and the partial order $\triangleleft$,
  there is an alignment $(X,A,\prec')$, where $\prec'$ is an an extension
  of $\prec$ defined by $P\prec' Q$ if and only if $P\prec Q$ for
  $P,Q\in Y$ for some $Y\in\mathfrak{Q}$ and $P\prec' Q$ for $P\in Y'$ and
  $Q\in Y''$ with $Y'\triangleleft Y''$.
\end{lemma}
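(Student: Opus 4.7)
The plan is to verify each of the alignment axioms for the modified triple $(X,A,\prec')$. Since the underlying graph $(X,A)$ is unchanged, conditions (\Dpsub) and (\Dpone) are inherited verbatim from $(X,A,\prec)$. The real content is to show that $\prec'$ is indeed a partial order on $\mathcal{C}(X,A)$ and that axioms (\Dporder) and (\Dpcross) remain valid once the cross-block comparabilities prescribed by $\triangleleft$ have been added.

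For the partial-order property it is convenient to associate with every column $P$ the unique block $Y(P)\in\mathfrak{Q}$ that contains it, which is well-defined by hypothesis (i). Then $P\prec' Q$ holds exactly when either $Y(P)=Y(Q)$ and $P\prec Q$, or $Y(P)\ne Y(Q)$ and $Y(P)\triangleleft Y(Q)$. Antisymmetry now splits into two compatible cases: the intra-block case reduces to antisymmetry of $\prec$, and the cross-block case reduces to antisymmetry of $\triangleleft$; a mixed situation cannot occur because the block membership function $Y(\cdot)$ forces the two alternatives to coincide. Transitivity is obtained by a case analysis on the triple $(Y(P),Y(Q),Y(R))$ for $P\prec' Q\prec' R$: the fully homogeneous case uses transitivity of $\prec$; the fully heterogeneous case uses transitivity of $\triangleleft$ together with its antisymmetry to exclude $Y(P)=Y(R)$; and the two mixed cases combine equality of blocks on one side with the $\triangleleft$-relation on the other.

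For axiom (\Dporder), suppose $(a,i)\in P$, $(a,j)\in Q$ with $(a,i)\prec_a(a,j)$. Applying (\Dporder) in the original alignment gives $P\prec Q$; if $Y(P)=Y(Q)$ this is $P\prec' Q$ by definition, and if $Y(P)\ne Y(Q)$ the defining property (ii) of blocks forces $Y(P)\triangleleft Y(Q)$, hence again $P\prec' Q$. The same reasoning also shows that $\prec'$ genuinely extends $\prec$, as required by the statement.

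For axiom (\Dpcross) the only new situation is a pair with $Y(P)\ne Y(Q)$ and $Y(P)\triangleleft Y(Q)$. Assume for contradiction that $(a,j)\prec_a(a,i)$. Then (\Dporder) applied to $(X,A,\prec)$ yields $Q\prec P$, and property (ii) of blocks then forces $Y(Q)\triangleleft Y(P)$; antisymmetry of $\triangleleft$ gives $Y(P)=Y(Q)$, contradicting our assumption. The main obstacle is therefore the transitivity case analysis for $\prec'$, which is where the interplay between $\prec$, $\triangleleft$, and the consistency property (ii) of blocks must be exploited most carefully; the remaining verifications are essentially bookkeeping.
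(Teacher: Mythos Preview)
Your proof is correct and in fact more complete than the paper's own argument. The paper's proof is very brief: it observes that each block is a disjoint union of complete alignment columns, notes that the intra-block order $\prec$ is preserved because each block is itself an alignment, and then remarks that any block can be split so as to recover the coarser order $\triangleleft$ consistently with $\prec'$. It does not explicitly verify that $\prec'$ is a partial order, nor does it check axioms (\Dporder) and (\Dpcross) for the extended relation.

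Your approach differs in that you run through the alignment axioms one by one, introducing the block-membership map $Y(\cdot)$ and doing the case analysis on $(Y(P),Y(Q),Y(R))$ for transitivity and on same-block versus cross-block pairs for (\Dporder) and (\Dpcross). This buys you an actual verification rather than an informal plausibility argument; in particular, your use of property~(ii) of the block partition together with antisymmetry of $\triangleleft$ to rule out the bad case in (\Dpcross) is exactly the missing step that the paper's proof glosses over. The paper's version is shorter but your version is the one that would convince a careful reader.
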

\begin{proof}
  Each alignment block consists of the disjoint union of alignment
  column(s), thus the disjoint union of complete subgraphs. Given the
  partial order of alignment columns given by $P\prec Q$, this order is
  preserverd inside the alignment blocks $Y_k$ as each block is an
  alignment, too.  Given an alignment block $Y$ with $P\prec Q$ for
  $P,Q\in Y$ for some $Y\in\mathfrak{Q}$, one can decompose this into two
  blocks $Y'$ and $Y''$ with at least one column in each block such that
  $P\in Y'$ and $Q\in Y''$.  Based on the decomposition of $Y$ into $Y'$
  and $Y''$ one can restore the order of the alignment blocks such that
  $Y'\triangleleft Y''$ based on $Y$.  Thus, one gets the order of
  $P\prec' Q$ that is present for the alignment columns $P$ and $Q$ as well
  as for the alignment blocks $Y'$ and $Y''$.
\end{proof}

In the case of totally ordered inputs, the restriction $X_a\cap Y$ of a block
$Y$ to an input $X_a$ is an interval of $X_a$ and the columns in $Y$ form
an interval of the columns of $(X,A,<)$. Similarly, one can restrict the choice
of blocks in such a way that $\triangleleft$ just ``mirrors'' the initial
partial order, i.e., $Y'\triangleleft Y''$ if and only if $P\prec Q$ for
$P$ in $Y'$ and $Q$ in $Y''$, in which case $\prec'\;=\;\prec$ and the original
alignment is recovered by the concatenation of the blocks. In particular,
this also guarantees that valid block decompositions can be constructed for
alignments satisfying (\Dprecover).
  
Each alignment can thus be recursively decomposed into blocks. This sets
the stage for Divide-and-Conquer algorithms such as \texttt{DCA}
\cite{Stoye:97}, which cuts the sequences to be aligned into subsequences
and then concatenates the subalignments so as to optimize a global
score. In order to find the best cut-points, the algorithm recurses on
differently cut subsequences. Algorithms such as \texttt{dialign}
\cite{Morgenstern:99b} work in a conceptually similar manner but use a
bottom-up instead of a top-down approach: they first identify blocks with
high sequence conservation as ``anchors'' and recurse to construct
alignments for sequences between them.

An extreme case of the block-wise decomposition is to consider the division
of an alignment $(X,A,\prec)$ into a single maximal (or minimal) alignment
column $P$, and the rest $(X\setminus P,A',\prec)$ of the alignment. In
order for $X\setminus A\triangleleft P$ to hold, we have to ensure that
$p_a\not\prec_a q_a$ for all $p_a\in P$ and $q_a\in X\setminus P$, i.e.,
the column $P$ must entirely consist of suprema of the respective input
posets. Under this condition, we obtain a recursive column-wise
decomposition of alignments. As we shall see in the following section, this
recursion can also be used constructively.

\section{Recursive Construction}

Given a poset $(Y,\prec)$ we say that $P\subseteq Y$ is a \emph{bottom
  set} if, for all $p\in P$, every $p'\prec p$ satisfies $p'\in P$.  By
definition, the empty set, $Y$ itself, as well as the set
$\{p'\in Y| p'\preceq y\}$ for each $y\in Y$ are bottom sets. Note,
however, that $P$ also may contain points that are incomparable to all
other elements of $P$. Denote by $\sup P$ the set of \emph{suprema} of $P$,
i.e., the points such that there is no $p'\in P$ with $p\prec p'$. Clearly,
if $P$ is a bottom set and $p\in\sup P$ then $P\setminus\{p\}$ is again a
bottom set. The latter observation suggests that there is a recursive
construction for the set of alignments of $(X_1,\prec_1)$ and
$(X_2,\prec_2)$.

Denote by $\AL{A}{P}{Q}$ the set of all pairwise alignments on bottom sets
$P$ in $X_1$ and $Q$ in $X_2$. An alignment $\mathbb{A}\in \AL{A}{P}{Q}$ is
necessarily of one of three types:
\begin{itemize}
\item[(i)] $\mathbb{A}=\mathbb{A}'\VV{p}{q}$ with
  $\mathbb{A}'\in \AL{A}{P'}{Q'}$,
\item[(ii)]  $\mathbb{A}=\mathbb{A}'\VV{p}{-}$ with
  $\mathbb{A}'\in \AL{A}{P'}{Q}$, or
\item[(iii)]  $\mathbb{A}=\mathbb{A}'\VV{-}{q}$ with
  $\mathbb{A}'\in \AL{A}{P}{Q'}$,
\end{itemize}
where $P':= P\setminus\{p\}$ for $p\in\sup P$, $Q':= Q\setminus\{q\}$ for
$q\in\sup Q$, and $\AL{A}{\emptyset}{\emptyset}$ contains only the empty
alignment.

The three cases correspond to a (mis)match, insertion, and deletion.  It is
important to note that this recursion is in general not unique because the
columns extracted from $\mathbb{A}$ in consecutive steps are not
necessarily ordered relative to each other whenever $|\sup P|\ge 1$ or $|\sup Q|\ge
1$. It is, however, a proper generalization of the Needleman-Wunsch
recursion \cite{Needleman:70} for the pairwise alignment of ordered sets
(strings): If the $\prec_a$ are total orders, then $\sup P_a$ always
contains a single element, and we recover the usual Needleman-Wunsch
algorithm. In order to have a proper start and end case for the recursion
and thus DP-algorithm, it is convenient to introduce ``virtual'' source and
a sink nodes being connected to all start or end nodes of the poset,
respectively.

This idea generalizes to alignments of an arbitrary number of partial
orders in the obvious way. Denote by $\mathfrak{A}(P_1,P_2,\dots,P_N)$ the
set of all alignments where the $P_a$ are a bottom set of $(X_a,\prec_a)$.
\begin{theorem}
  Every alignment $\mathbb{A}\in \mathfrak{A}(P_1,P_2,\dots,P_N)$ is of the
  form $\mathbb{A'}\Xi$ where the alignment column $\Xi$ is a supremum
  w.r.t\ the partial order of $\prec$ of alignment columns and
  $\mathbb{A'}\in \mathfrak{A}(P'_1,P'_2,\dots,P'_N)$. The column $\Xi$
  contains in row $a$ either a gap row $a$, in which case $P'_a=P_a$, or
  $p_a\in\sup P_a$, in which case $P'_a=P_a\setminus\{p_a\}$, and does not
  entirely consist of gaps. For every column $\Upsilon$ of $\mathbb{A}'$
  we have either $\Upsilon\prec\Xi$ or $\Upsilon$ and $\Xi$ are
  incomparable.
\end{theorem}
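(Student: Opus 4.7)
The plan is to take $\Xi$ to be any maximal element of the finite poset $(\mathcal{C}(X,A),\prec)$; existence follows from finiteness of the vertex set, hence of the column set. The proof then reduces to three checks: (a) every non-gap entry of $\Xi$ sits in $\sup P_a$ of the corresponding row; (b) removing $\Xi$ leaves an alignment of the modified bottom sets $P'_a$; and (c) the remaining columns are all $\prec$-below $\Xi$ or $\prec$-incomparable to it.

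For (a) I would argue by contradiction. Suppose $(a,i)\in\Xi$ but $(a,i)\notin\sup P_a$; then some $(a,j)\in P_a$ satisfies $(a,i)\prec_a(a,j)$, and $(a,j)$ belongs to some column $Q\in\mathcal{C}(X,A)$. Axiom (\Dporder) immediately yields $\Xi\prec Q$, contradicting the maximality of $\Xi$. The condition that $\Xi$ is not entirely gaps is automatic, since a connected component of $(X,A)$ must contain at least one vertex.

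For (b) I would set $Y:=X\setminus\Xi$ and apply Lemma~\ref{lem:subset} to obtain the alignment $(X,A,\prec)[Y]$; because $\Xi\cap Y=\emptyset$ and distinct columns are disjoint, the columns of this restriction coincide with $\mathcal{C}(X,A)\setminus\{\Xi\}$ unchanged, so the restricted alignment is precisely $\mathbb{A}'$. The only genuine verification is that each $P'_a$ is again a bottom set of $(X_a,\prec_a)$: if $P'_a=P_a$ there is nothing to show, and if $P'_a=P_a\setminus\{p_a\}$ with $p_a\in\sup P_a$, then for any $q\in P'_a$ and any $q'\in X_a$ with $q'\prec_a q$, the bottom-set property of $P_a$ gives $q'\in P_a$, and $q'=p_a$ is excluded because $p_a\in\sup P_a$ cannot precede another element of $P_a$. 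Hence $\mathbb{A}'\in\mathfrak{A}(P'_1,\dots,P'_N)$. Part (c) is then immediate from the maximality of $\Xi$ in $(\mathcal{C}(X,A),\prec)$.

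The main obstacle is step (a): this is the only point where the internal orders $\prec_a$ on the rows interact with the induced order $\prec$ on the columns, and the argument relies crucially on axiom (\Dporder). Everything else amounts to bookkeeping on top of Lemma~\ref{lem:subset} together with the elementary observation that removing a maximal element of a bottom set yields another bottom set.
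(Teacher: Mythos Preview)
Your proposal is correct and follows essentially the same route as the paper: choose $\Xi$ as a maximal column, use axiom~(\Dporder) in a contradiction argument to force every non-gap entry of $\Xi$ into $\sup P_a$, and observe that removing a supremum from a bottom set yields another bottom set. Your write-up is in fact more careful than the paper's, which does not explicitly invoke Lemma~\ref{lem:subset} for step~(b) nor justify the existence of a maximal column via finiteness; the paper's proof compresses all of this into a few lines and leaves the bookkeeping implicit.
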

\begin{proof}
  The $P'_a$ are again bottom sets, hence $\mathbb{A'}$ is an alignment.
  By assumption, there is a partial order on the columns $\prec$ of
  $\mathbb{A}'$.  Since every non-gap entry in $\Xi$ is a $p_a\in\sup P_a$,
  it follows that this partial order extends to $\mathbb{A}$ if and only if
  $\Xi$ is a supremum, i.e., it is either incomparable with or larger than
  any column in $\mathbb{A}'$.  Now suppose that the column $\Xi$ contains
  a $q_a\notin\sup P_a$, i.e., there is a $p_a\in X_a$ with $p_a\succ
  q_a$. Consider the column $\Upsilon$ containing $p_a$. Then either no
  partial order $\prec$ on the columns exists (contradicting that
  $\mathbb{A}'$ is an alignment), or $\Upsilon\succ\Xi$ (contradicting that
  $\Xi$ is a supremum for the alignment columns.
\end{proof}

The bottom sets are of course uniquely defined by their suprema. Clearly
$\sup P$ is an antichain, i.e., its elements are pairwisely
incomparable. Conversely, every antichain $U$ in $(X_a,\prec_a)$ uniquely
defines a bottom set $P:=\{p\in X_a| p\preceq U\}$. It is obvious therefore
that for two bottom sets $P$ and $Q$ it holds that $P=Q$ if and only if
$\sup P = \sup Q$. Hence there is a 1-1 correspondence between the
antichains of a partial order and their bottom sets. The recursion in the
theorem can be written in terms of the antichains of the $(X_a,\prec_a)$.

In order to capture the more restrictive notion of alignments satisfying
(\Dprecover) the recursion has to be modified in a such a way that for
every (mis)match between two rows it can be ensured that all previously
formed columns are either comparable in both rows or incomparable in both
rows. This is non-trival because this information is not purely local. For
ease of discussion, we only consider the case of aligning two posets. There
are at least two strategies to maintain this information.

Attempting to construct a similar recursion as in the (\Dpcross) case, one
could store with each pair $P\in X_1$ and $Q\in X_2$ also all the set
$\mathcal{M}$ of all matchings $\VV{p}{q}$ ``to the right'' of $P$ and $Q$,
i.e., $p\in X_1\setminus P$ and $q\in X_1\setminus Q$. Then every allowed
matching/column $\VV{p'}{q'}$, $p'\in\sup P$ and $q'\in\sup Q$ must
satisfy: for all $\VV{p}{q}\in\mathcal{M}$ holds: either $p'\prec p$ and
$q'\prec q$, or both $p',p$ and $q',q$ are incomparable. Every such pair
can be appended to $\mathcal{M}$, with corresponding updates
$P\to P\setminus\{p'\}$ and $Q\to Q\setminus\{q'\}$. Insertions and
deletions of course only require the removal of either $p'$ from $P$ or
$q'$ from $Q$, respectively. Initially, $P=X_1$, $Q=X_2$, and
$\mathcal{M}=\emptyset$. Every set of valid partial alignments is
characterized by a triple $(P,Q,\mathcal{M})$.

An alternative approach is to store instead for each $p\in P$ and $q\in Q$
also the sets $c_Q(p)$ and $c_P(q)$ that can form matches $\VV{p}{q'}$,
$q'\in c_Q(p)$ and $\VV{p'}{q}$, $p\in c_P(q)$, respectively. Initially, we
have $P=X_1$, $Q=X_2$, $c_Q(p)=Q$ for all $p\in P$ and $c_P(q)=P$ for all
$q\in Q$.  Whenever an alignment is continued with a (mis)match
$\VV{p}{q}$, $p\in \sup P$, $q\in \sup Q$, we have to remove all candidates
from $c_P(q')$ and $c_Q(p')$ that are inconsistent with $\VV{p}{q}$. That
is: if $q'\prec q$, then
$c_P(q')\leftarrow \{p'\in c_P(q')\vert p'\prec p\}$. If $q$ and $q'$ and
incomparable, then
$c_P(q')\leftarrow \{p'\in c_P(q')\vert p',p \textrm{ incomparable}\}$. The
$c_Q(p')$ are updated correspondingly.  In the case of an insertion
$\VV{p}{-}$, we only need to remove $p$ from $f_P(q')$, $q'\in Q$.
Similarly, $\VV{-}{q}$ implies that $q$ has to be removed from the
$f_Q(p')$ for all $p'\in P$. We suspect that an encoding of alignment sets
of the form $(P,f_Q:P\to 2^P; Q, f_P:Q\to 2^P)$ will be efficient if the
poset has only small antichains. A more detailed analysis of this kind of
recursive construction from the point of view of algorithmic efficiency
will be considered elsewhere.

The POA algorithm \cite{Lee:02} computes the alignment of two posets satisfying
(\Dprecover), albeit with the restriction that one of the two inputs is totally
ordered. This removes all ambiguities in the totally ordered po-set and implies
that, given any match $\VV{u}{v}$ in the alignment, all preceeding matches
$\VV{u'}{v'}$ satisfy $v'<v$ in the totally ordered set and thus $u'$ must be a
predecessor of $u$. The alignment thus must follow a single path in the Hasse
diagram of the unrestricted input poset.

\section{Pairwise Alignments as Relations} 

Pairwise alignments have a particularly simple structure. In particular,
they are bipartite (undirected) graphs, and hence can be regarded equivalently
as symmetric binary relations $R\subseteq X_1\times X_2$. More precisely,
we can identify a relation $R$ with an undirected graph with vertex set
$X_1\dot\cup X_2$ and (undirected) edges $\{x_1,x_2\}$ whenever
$(x_1,x_2)\in R$. We write this graph as $(X_1\dot\cup X_2,R)$. 

Relations have a natural composition. For $R\subseteq X\times Y$ and
$S\subseteq Y\times Z$ is is defined by 
\begin{equation}
  (x,z)\in S\circ R \quad\textrm{iff}\quad
  \exists y\in Y \textrm{ s.t. } (x,y)\in R\textrm{ and } (y,z) \in S
\end{equation}

In the following we will be interested in the following properties of
  binary relations:
\begin{itemize}
\item[(M)]
  $(x,y)\in R$ and $(x,z)\in R$ implies $y=z$ and 
  $(x,z)\in R$ and $(y,z)\in R$ implies $x=y$.
\item[(P')] There is a partial order $\prec$ on $R$ such that $u\prec_1 x$
  or $v\prec_2 y$ implies $(u,v)\prec(x,y)$.
\item[(P)] If $(x_1,y_1)\in R$ and $(x_2,y_2)\in R$ then $x_1\prec x_2$ if
  and only if $y_1\prec y_2$.
\end{itemize}

\begin{lemma}
  The composition of two binary relations satisfying (M) and (P) is again
  a binary relation satisfing (M) and (P).
\end{lemma}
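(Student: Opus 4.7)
My plan is to unpack the composition directly and verify each of the two properties in turn, exploiting the fact that (M) makes the relations behave like partial bijections and (P) makes them order-preserving in both directions. Fix $R\subseteq X\times Y$ and $S\subseteq Y\times Z$, both satisfying (M) and (P), and consider $S\circ R\subseteq X\times Z$.

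First I would check (M) for $S\circ R$. Suppose $(x,z_1),(x,z_2)\in S\circ R$. By definition of composition there exist $y_1,y_2\in Y$ with $(x,y_1),(x,y_2)\in R$ and $(y_1,z_1),(y_2,z_2)\in S$. Applying (M) to $R$ on the pair $(x,y_1),(x,y_2)$ yields $y_1=y_2$, and then (M) applied to $S$ on the pair $(y_1,z_1),(y_1,z_2)$ gives $z_1=z_2$. The symmetric implication (second clause of (M)) is proved the same way, swapping the roles of $R$ and $S$: from $(x_1,z),(x_2,z)\in S\circ R$ one finds $y_1,y_2$ with $(y_1,z),(y_2,z)\in S$, forcing $y_1=y_2$ by (M) for $S$, and then $x_1=x_2$ by (M) for $R$.

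Next I would verify (P) for $S\circ R$ with respect to the posets $(X,\prec_X)$ and $(Z,\prec_Z)$. Take any two elements $(x_1,z_1),(x_2,z_2)\in S\circ R$ and pick witnesses $y_1,y_2\in Y$ as above. Property (M) ensures that these witnesses are uniquely determined by $x_i$ (or equivalently by $z_i$), so the assignment $x_i\mapsto y_i\mapsto z_i$ is well defined. Then (P) applied to $R$ gives the equivalence $x_1\prec_X x_2\iff y_1\prec_Y y_2$, and (P) applied to $S$ gives $y_1\prec_Y y_2\iff z_1\prec_Z z_2$. Chaining the two equivalences yields the desired $x_1\prec_X x_2\iff z_1\prec_Z z_2$.

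I don't expect any real obstacle: the argument is essentially the observation that (M) turns each relation into a graph of a partial injection both ways, so composition remains a partial injection, and (P) is exactly the condition that these partial injections are order-isomorphisms onto their images, which is obviously preserved under composition. The only thing to be careful about is using (M) to pin down the intermediate witness $y_i$ uniquely before invoking (P), which is what makes the chained biconditional legal.
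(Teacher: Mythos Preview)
Your proposal is correct and follows essentially the same approach as the paper: verify (M) by using (M) for $R$ and $S$ to pin down the intermediate witness uniquely, then verify (P) by chaining the two biconditionals through that witness. The paper additionally spells out the incomparable case separately, but that is already implied by your chained biconditional, so your argument is complete as written.
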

\begin{proof}
  Suppose $(x,z)\in R\circ S$. Then there is $y$ such that
  both $(x,y)\in R$ and $(y,z)\in S$. By (M), there is no other $y'\ne y$
  with $(x,y')\in R$ and no $z'\ne z$ such that $(y',z')\in S$, hence in
  particular there is no $z'\ne z$ such that $(x,z')\in R\circ S$.
  Analogously, one argues that there is no $x'\ne x$ such that
  $(x',z)\in R\circ S$. Thus $R\circ S$ again satisfies (M). 

  Suppose $(x_1,z_1), (x_2,z_2) \in R\circ S$. By (M) there are unique
  vertices $y_1$ and $y_2$ such that $(x_1,y_1),(x_2,y_2)\in R$ and
  $(y_1,z_1),(y_2,z_2)\in S$, respectively. Now suppose $x_1\prec_1 x_2$.
  Then (P) implies $y_1 \prec_2 y_2$, and using (P) again yields
  $z_1 \prec_3 z_2$. Starting from $z_1 \prec_3 z_2$, the same argument
  yields $z_1\prec_1 z_2$.  Conversely, suppose
  $(x_1,z_1), (x_2,z_2) \in R\circ S$ and $x_1,x_2$ are incomparable. By
  (M) there are unique vertices $y_1$ and $y_2$ with
  $(x_1,y_1), (x_2,y_2) \in R$ and $(y_1,z_1), (y_2,z_2) \in S$, for which
  (P) now implies that they are incomparable. Using the same argument again
  shows that that $z_1$ and $z_2$ also must be incomparable. Hence
  concatenation preserves not only the relative order but also
  comparability, i.e., $R\circ S$ again satisfies (P). 
\end{proof}

It is easy to see that Axiom (P') is in general not preserved under
concatenation: Requiring only (P') allows the intermediate vertices $y_1$
and $y_2$ to be incomparable. Hence it is possible in this scenario to
have $x_1\prec_1 x_2$, incomparable vertices $y_1$ and $y_2$, and
$z_2\prec_3 z_1$ with $(x_1,y_1),(x_2,y_2)\in R$ and
$(y_1,z_1),(y_2,z_2)\in S$ while the concatenation violates the (P').

A relation satisfying (M) and (P') can easily be extended to an alignment
$(X_1\cup X_2,R)$ considering each edge $(x_1,y_1)$ and considering all
unmatched positions, i.e., every $\{x'\}$ such that there is no
$y\in X_2 (x',y)$ and every $\{y'\}$ such that there is no
$x\in X_1 (x,y')$ as alignment columns.  The relative order of these
columns is inherited from the partial order $(X_1,\prec_1)$ and
$(X_2,\prec_2)$. 

\begin{lemma} 
  Every pairwise alignment satisfying {\rm(\Dpsub)}, {\rm(\Dpone)},
  {\rm(\Dporder)}, and {\rm(\Dpcross)} can be written as an extension of
  the a binary relation $R\subseteq X_1\times X_2$ satisfying {\rm(M)} and
  {\rm(P')}. Conversely, every binary relation $R\subseteq X_1\times X_2$
  satisfying {\rm(M)} and {\rm(P')} gives rise to an alignment satisfying
  {\rm(\Dpsub)}, {\rm(\Dpone)}, {\rm(\Dporder)}, and {\rm(\Dpcross)}.
\end{lemma}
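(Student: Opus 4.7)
The plan is to exhibit the correspondence by identifying the pair-columns of any pairwise alignment with the elements of $R$ and treating the remaining singleton columns as the ``extension'' described in the text. For the forward direction, starting from an alignment $(X_1\dot\cup X_2,A,\prec)$ satisfying {\rm(\Dpsub)}--{\rm(\Dpcross)}, axiom {\rm(\Dpone)} forces every non-singleton column to be of the form $\{x_1,x_2\}$ with $x_1\in X_1$, $x_2\in X_2$, so I would set
\[
R := \{(x_1,x_2)\in X_1\times X_2 \,:\, \{x_1,x_2\}\in\mathcal{C}(X,A)\}.
\]
Property (M) is immediate from the uniqueness of the column containing a given vertex, and transporting $\prec$ along the bijection between $R$ and the two-element columns yields a partial order on $R$ for which (P') is just {\rm(\Dporder)} restricted to pair-columns. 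The original alignment is recovered as the extension of $R$: the singleton columns are exactly the vertices unmatched by $R$, and their relative order is the one inherited from $\prec_1$ and $\prec_2$ by {\rm(\Dporder)}.

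For the converse, given $R$ satisfying (M) and (P'), I would form the graph on $X_1\dot\cup X_2$ whose edges are the pairs of $R$. By (M) every vertex has degree at most one, so its connected components are exactly the pair-columns $\{x_1,x_2\}$ for $(x_1,x_2)\in R$ together with a singleton at every unmatched vertex. Axiom {\rm(\Dpsub)} holds trivially (each component has at most two vertices) and {\rm(\Dpone)} holds because each pair-column uses one vertex per row. The partial order on all columns is then obtained as the transitive closure of the union of the (P')-order on pair-columns with the rule ``$P\to Q$ whenever some $x\in P$ and some $y\in Q$ lie in a common row $a$ with $x\prec_a y$.''

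The principal obstacle is to verify that this transitive closure is antisymmetric, in the spirit of Lemmas~\ref{lem:totpo} and~\ref{lem:po2}. A hypothetical directed cycle $P_0\to P_1\to\dots\to P_k=P_0$ decomposes into row-$a$ steps (which strictly advance a $\prec_a$-element) and (P')-steps between pair-columns. For each row $a$, the implication in (P') together with antisymmetry of the given partial order on $R$ forbids any strict $\prec_a$-reversal along a (P')-step, since such a reversal would force the opposite (P')-comparison and contradict the one at hand; hence row-$a$ elements never move $\prec_a$-backwards along the cycle, and as soon as one genuine row-$a$ step is present the cycle would produce a strict $\prec_a$-cycle, contradicting antisymmetry of $\prec_a$. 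Once $\prec$ is confirmed to be a partial order, {\rm(\Dporder)} holds by construction, and {\rm(\Dpcross)} is immediate: for pair-to-pair comparisons it reduces to the same ``no strict reversal'' observation, while whenever one of the two columns is a singleton the missing row makes the other row's condition vacuous and the single-row condition is exactly the defining one.
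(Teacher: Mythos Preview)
Your setup matches the paper's: identify the two-element columns with $R$, treat unmatched vertices as singleton in/del columns, and argue (M)$\leftrightarrow$(\Dpsub)+(\Dpone) while (P') plays the role of (\Dporder)/(\Dpcross). The paper's own argument for the converse direction is in fact much terser than yours and does not spell out the antisymmetry step at all.

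Your antisymmetry argument, however, has a gap. You conclude that a hypothetical cycle containing a row-$a$ step ``would produce a strict $\prec_a$-cycle'' because (P')-steps forbid row-$a$ reversals. But ``no reversal'' is weaker than ``advance'': along a (P')-step the two row-$a$ entries may well be \emph{incomparable}, and a row-$b$ singleton (for $b\ne a$) carries no row-$a$ entry at all, so the projection of the cycle to row $a$ need not form a $\prec_a$-chain. The clean fix is to reduce to $\prec_R$ rather than to $\prec_a$. Consecutive singletons in the cycle must share a row (there is no edge between a row-$1$ singleton and a row-$2$ singleton), so every maximal run of singletons can be shortcut into a single row edge between the adjacent pair-columns; if the cycle contains no pair-column at all it is already a $\prec_a$-cycle in that common row. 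After shortcutting, all remaining columns are pair-columns, and every edge between pair-columns---whether an original (P')-step or a row-step---is a $\prec_R$-step by (P'), contradicting that $\prec_R$ is a partial order. Once antisymmetry is secured, (\Dpcross) follows in one line: a reversal $(a,j)\prec_a(a,i)$ with $P\prec Q$, $(a,i)\in P$, $(a,j)\in Q$ would give $Q\prec P$ by the row rule and hence $P=Q$.
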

\begin{proof}
  By definition, all edges are incident to one vertex in $X_1$ and one
  vertex in $X_2$, thus the graph is a bipartite matching. Condition (M) is
  therefore equivalent to (\Dpsub) and (\Dpone) for the case of two input
  posets. Axiom (\Dporder) implies the ordering required by (P') as well as
  its extension to the in/del columns. (\Dpcross) and (P') equivalently
  guarantee the existence of the partial order on the columns that satisfy
  (\Dporder).
\end{proof}

\begin{lemma} 
  Every pairwise alignment satisfying {\rm(\Dprecover)} corresponds to a
  binary relation $R\subseteq X_1\times X_2$ satisfying {\rm(M)} and
  {\rm(P)}.
\end{lemma}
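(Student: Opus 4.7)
The plan is to reuse the construction from the preceding lemma and strengthen its conclusion from (P') to (P) by invoking axiom (\Dprecover). Given a pairwise alignment $(X_1\cup X_2,A,\prec)$ satisfying (\Dpsub), (\Dpone), (\Dporder), and (\Dprecover), I would define $R\subseteq X_1\times X_2$ by declaring $(x_1,x_2)\in R$ iff $\{x_1,x_2\}$ is a (mis)match column, i.e., a connected component of $(X,A)$ meeting both rows. Observe that (\Dprecover) is strictly stronger than (\Dpcross), since it forbids the ``incomparability'' alternative left open by the disjunction in (\Dpcross). Hence the hypotheses of the preceding lemma are met and $R$ already satisfies (M) and the weaker (P'). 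It remains only to upgrade (P') to the biconditional form (P).

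For this, pick $(x_1,y_1), (x_2,y_2)\in R$ and let $P=\{x_1,y_1\}$ and $Q=\{x_2,y_2\}$ be the corresponding match columns. Assume first that $x_1\prec_1 x_2$. Then axiom (\Dporder) applied to row $1$ immediately forces $P\prec Q$. Now invoking (\Dprecover) on row $2$, with $y_1\in P$, $y_2\in Q$, and $P\prec Q$ already in hand, yields $y_1\prec_2 y_2$. Running the same chain with the roles of the two rows interchanged — start from $y_1\prec_2 y_2$, apply (\Dporder) on row $2$ to obtain $P\prec Q$, then (\Dprecover) on row $1$ — establishes the converse implication. Hence $x_1\prec_1 x_2$ if and only if $y_1\prec_2 y_2$, which is exactly (P).

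The ``hard part'' here is really only conceptual: one has to notice that (\Dporder) and (\Dprecover) are mirror-image implications — ``row order $\Rightarrow$ column order'' and ``column order $\Rightarrow$ row order,'' respectively — and that their combination, applied alternately on the two rows through the common column order $\prec$, delivers precisely the bidirectional statement demanded by (P). In the preceding lemma only the weaker one-sided (\Dpcross) was available, which is why only (P') could be obtained there; replacing it by (\Dprecover) is exactly the upgrade needed here.
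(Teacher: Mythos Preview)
Your argument is correct and follows essentially the same line as the paper's proof, only spelled out in more detail: the paper simply asserts that ``axiom (\Dprecover) simplifies to (P) in the case of only two inputs,'' whereas you make explicit the alternating use of (\Dporder) and (\Dprecover) across the two rows to obtain the biconditional. The paper's proof additionally cites Lemma~\ref{lem:po2} to secure the existence of the column partial order, which is relevant if one also wants the converse passage from a relation satisfying (M) and (P) back to an alignment; your write-up addresses only the forward direction that the lemma literally states, and for that direction your reduction to the preceding lemma (via the observation that (\Dprecover) implies (\Dpcross)) together with the (\Dporder)/(\Dprecover) upgrade is exactly right.
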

\begin{proof}
  Axiom (\Dprecover) simplifies to (P) in the case of only two inputs. The
  existence of the required partial order on the set of all columns is
  guaranteed by Lemma~\ref{lem:po2}.
\end{proof}

This suggests that the more restrictive condition (\Dprecover) may be a more
natural condition for defining alignments of partially ordered sets.  As a
down-side, however, it seems that there is no convenient recursive
construction of the search space similar to the dynamic programming
approaches for sequence alignment. Instead, it seems more natural to treat
this class of alignment problems as maximum induced subgraph problems.

Composition of binary relations is a powerful tool to construct multiple
alignments. Suppose we are given a set of posets $(X_a,\prec_a)$ and a set
$\mathcal{R}$ of pairwise relations satisfying (M) and (P) such that the
graph representation of $\mathcal{R}$ is tree, then there is a unique
multiple alignment satisfying (\Dprecover) obtained as the transitive closure of the
graph on $X$ with edges defined by the $R\in\mathcal{R}$.  However, not
every alignment can be represented in this manner. As a
simple counterexample consider the alignment of the three sequences\\
\begin{verbatim}
  a  A-C         a  A-C                  a  A-C
  b  -BC         b  -BC     b  -BC       b  -BC
  c  AB-                    c  AB-       c A-B-
\end{verbatim}
where the composition of any two pairwise alignments gives rise to two
different columns for in/del columns of the pairwise components, in the
example of two \texttt{A} entries. On the other hand the progressive
approach, in which sequence \texttt{c} is aligned to the pairwise alignment
of \texttt{a} and \texttt{b} yields the example alignment. In fact,
Lemma~\ref{lem:quotient} implies that in principle every alignment can be
obtained by a progressive alignment scheme.

If $\mathcal{R}$ contains cycles, then there is no guarantee that the
transitive closure $\widehat{A}$ of $\bigcup_{R\in \mathcal{R}}R$ is an
alignment: In general, both conditions (\Dpsub) and (\Dpone) will be
violated. So-called \emph{transitive alignment} approaches deliberately
accept this at an intermediate stage. Various heuristics can be used to
remove superfluous edges from the graph $(X,\widehat{A})$, that is they
construct a subgraph $(X,A)$, $A\subseteq\widehat{A}$ that again satisfies
all conditions of a valid alignment.

\section{Discussion} 

An interesting idea that follows quite naturally from the discussion above
is a general approach towards graph comparison for sets of graphs: it seems
natural to generalize the idea of progressive alignments in the following
manner: (1) Given two graphs $G_1$ and $G_2$ and a common induced subgraph
$H$ (strictly speaking together with an embedding of $H$ into $G_1$ and
$G_2$) the graph defined by identifing the copies of $H$ in $G_1$ and $G_2$
can be thought of as pairwise alignment. If $G_1$ and $G_2$ have vertex
labels $\alpha_i:V(G_i)\to A_i$, $i=1,2$ for some alphabets $A_i$, on
labels $G_1 \bullet_H G_2$ with label pairs ($\alpha_1(x),\alpha_2(x)$) for
$x\in V(H)$, $(\alpha_1(x),-)$ for $x\in V(G_1\setminus H)$ and
$(-,\alpha_2)$ for $x\in V(G_2\setminus H)$. Naturally, an optimization
criterion such as ``maximal common induced subgraph'' will be used in
practice. Since $G_1 \bullet_H G_2$ is again a (labeled) graph, the
procedure can be repeated e.g.\ along a line of guidetrees. This gives
raise to a natural notion of a multiple alignment of graphs $G_a$ with
vertex sets $V(G_a)$ and edge sets $E(G_a)$. Let $X=\dot\bigcup V(G_a)$ and
$A$ be a set of undirected edges on $X$ and let $\mathcal{C}(X,A)$ be the set
of connected components of the graph $(X,A)$. Then $(X,A,E^*)$ is a
multiple alignment of the graph $G_a$, where $E^*$ denotes the set of edges
on $\mathcal{C}(X,A)$.
\begin{itemize}
\item[(G1)] $Q\in\mathcal{C}(X,A)$ is complete subgraph of $(X,A)$.
\item[(G2)] If $(a,i)\in Q$ and $(a,j)\in Q$, then $i=j$.
\item[(G3)] If $(a,i)\in P$, $(a,j)\in Q$ for some
  $P,Q\in \mathcal{C}(X,A)$ and $((a,i),(a,j))\in E(G_a)$ then
  $(P,Q)\in E^*$
\item[(G5)] If $(P,Q)\in E$, $(a,i)\in P$, and $(a,j)\in Q$ then
  $((a,i),(a,j))\in E^*$
\end{itemize}
The graph $(\mathcal{C}(X,A),E^*)$ can be constructed as the quotient graph
$(X,A\cup\bigcup_a E(G_a))/\mathcal{C}(X,A)$ obtained by adding in all the
edges of $G_a$ and collapsing all columns (connected components) of $(X,A)$
to a single vertex.  A plausible generalization of (\Dpcross) might be to
require
\begin{itemize}
\item[(G4)] If $(P,Q)\in E^*$ then there is a row $a$ with $(a,i)\in P$,
  $(a,j)\in Q$ and $((a,i),(a,j))\in E(G_a)$,
\end{itemize}
i.e., (G3) completely determines the edges between alignment columns. In
this setting (G4) does not impose additional conditions on the columns.
However, if both the input graphs $G_a$ and the alignment graph
$(\mathcal{C}(X,A),E^*)$ are restricted to particular graph classes, such
constraints appear. The graphs of partially ordered sets, i.e., the
transitive acyclic digraphs discussed at length in the previous sections,
of course, serve as a non-trivial example.

It is important to note the graph alignment in the sense used here --
namely requiring a matching between vertices and notion of structural
congruence between the alignment and its consitutent graphs -- are more
restrictive than some concepts of ``graph alignments'' discussed in the
literature. In particular, we make a sharp distinction here between ``graph
alignments'' and various approaches of comparison by means of graph
editing, see e.g.\ \cite{EmmertStreib:16} for a recent review.

The example of graph alignments and oriented tree alignments suggests to
consider an even broader class of structures: Given a (finite) collection
of sets $X_a$, each endowed with a set of relational structures (or more
general set systems), we may ask for collections of partial maps between
any pair of them that satisfy (G1) and (G2), i.e., define a partition on
$\bigcup_a X_a$ such that each class contains at most one element of each
$X_a$ and the relation (or set system) structure is preserved in a sense
similar to conditions (G3) and (G5) above.  Such constructions are of
practical interest e.g.\ for alignments of oriented trees (or forests)
\cite{Jiang:95,Hoechsmann:04,Berkemer:17}, where two distinct partial
orders are defined, one capturing the order implied by parent-child
relationship and another one representing the relative order of siblings.
Alignments of ordered trees preserve both partial orders, since the
alignment is defined as an ordered tree on the columns such that each
ordered input tree (with vertices $X_a$) is obtained as a restriction to
exactly the columns in which row $a$ does not have a gap
entry. Intuitively, this seems to require that (1) a super-object $G$
exists for a pair of objects $G_1$ and $G_2$ such that $G_1$ and $G_2$ can
be obtained as projections and (2) an intersection of the embeddings of
$G_1$ and $G_2$ into $G$ defines an induced sub-object $H$ common to $G_1$
and $G_2$ is well defined. While the super-object corresponds to the
alignments, the sub-object takes on the role of matches in the alignment.
A similar notion of alignment is used in computational biology for RNA
structures, where base pairs need to be preserved in addition the total order
of the input sequences \cite{Moehl:10}. Here, however, only consistency
similar in flavor to (\Dpcross) is enforced, suggesting that it may be of
interest to relax the requirement of \emph{induced} sub-objects.

The recursive formulation of the poset alignments is an extension of the
well-known Needleman-Wunsch alignment algorithm. Beyond many
implementations of the Needleman-Wunsch algorithm, the implementation based
on \texttt{ADPfusion} (Algebraic Dynamic Programming with compile-time fusion
of grammar and algebra) \cite{HOE:2012} is designed in a
way to be extendable to different scoring functions, problem descriptions,
and data structures \cite{Hoener:15a}.  Future work thus will include the
adaptation of the \texttt{ADPfusion} framework written in a functional
language (Haskell) to the data structure of posets. Earlier adaptations of
the Needleman-Wunsch algorithm to trees, forests and sets already exist
\cite{Berkemer:17,HOE:PRO:2015}.

It may also be possible to implement the poset alignment algorithm for the
(\Dprecover)-notion of alignments in a way similar to the graph alignment algorithm
above, i.e., starting from a maximal induced common subgraph that is then
extended. Depending on the structure of the posets, this might be more
efficient than the recursive DP algorithm where additional information has
to be stored and updated in each step.

Finding maximal induced common subgraphs is well known to be a NP-complete problem.
Nevertheless, DP algorithms have been devised for restricted settings such as
planar graphs \cite{Fomin:11}. These proved practical for moderate size
problems even though their resource requirements still scale exponentially.

For general graphs, there exist algorithms to detect common subgraphs
\cite{Akutsu:93}. However, as the problem is NP-complete, the problem can only
be solved for small instances of the input structures in a reasonable amount of
time. For small graphs such as representations of small chemical molecules, the
DP algorithm might be able to solve the maximal common subgraph problem as
described in \cite{Akutsu:93}. Here, the DP algorithm is based on the analogous
version for trees where the vertex degree has to be bounded in order to find a
solution in a reasonable time frame. The algorithm divides the input structures
in (overlapping) biconnected components and tries to find the best match
between both input graphs preserving the order of the biconnected components of
the original graph.

Finally, it seems natural to consider multiple alignments at a more abstract
level: In order to properly define them, it seems sufficient that (induced)
sub-objects can be used to ``glue together'' two (and recursively more) objects
in such a way that the resulting super-object projects down to the given inputs.
It is natural to ask how such structures can be characterized in the language
of category theory. Is there an interesting class of categories that admit
well-defined alignments objects, and do the resulting alignments themselves
from categories with useful properties?

\section*{Acknowledgements}

This work was supported in part by the German Academic Exchange Service
(DAAD), proj.no.\ 57390771.

\bibliographystyle{plainnat}
\bibliography{aligncomposition}

\end{document}